\newcommand{\be}{\begin{equation}}
\newcommand{\ee}{\end{equation}}
\newcommand{\bea}{\begin{eqnarray}}
\newcommand{\eea}{\end{eqnarray}}
\numberwithin{equation}{section}
\newtheorem{thm}{Theorem}[section]
\newtheorem{defn}[thm]{Definition}
\newtheorem{lem}[thm]{Lemma}
\newtheorem{cor}[thm]{Corollary}
\newtheorem{eg}[thm]{Example}
\def\th@newremark{\th@remark\thm@headfont{\bfseries}}
\theoremstyle{newremark}
\definecolor{darkgreen}{rgb}{0,0.5,0}
\definecolor{darkred}{rgb}{0.7,0,0}
\definecolor{darkblue}{rgb}{0,0,0.7}
\newcommand{\aA}{\mathcal{A}}
\newcommand{\cC}{\mathcal{C}}
\newcommand{\dD}{\mathcal{D}}
\newcommand{\iI}{\mathcal{I}}
\newcommand{\lL}{\mathcal{L}}
\newcommand{\mM}{\mathcal{M}}
\newcommand{\nN}{\mathcal{N}}
\newcommand{\pP}{\mathcal{P}}
\newcommand{\sS}{\mathcal{S}}
\newcommand{\wW}{\mathcal{W}}
\def\emptyset{\mathop{\centernot\ocircle}}
\newcommand{\E}{\mathbf{E}}
\newcommand{\R}{\mathbf{R}}
\newcommand{\Z}{\mathbf{Z}}
\newcommand{\0}{\mathbf{0}}
\newcommand{\n}{\mathbf{n}}
\newcommand{\z}{\mathbf{z}}
\newcommand*\Bell{\ensuremath{\boldsymbol\ell}}
\newcommand*\Bpsi{\ensuremath{\boldsymbol\psi}}
\newcommand*\Bzeta{\ensuremath{\boldsymbol\zeta}}
\newcommand*\Btheta{\ensuremath{\boldsymbol\theta}}
\colorlet{symbols}{blue!90!black}
\colorlet{testcolor}{green!60!black}
\def\${|\!|\!|}
\def\P{\mathbf{P}}
\def\drawx{\draw[-,solid] (-3pt,-3pt) -- (3pt,3pt);\draw[-,solid] (-3pt,3pt) -- (3pt,-3pt);}
\tikzset{
	root/.style={circle,fill=testcolor,inner sep=0pt, minimum size=2mm},
	dot/.style={circle,fill=black,inner sep=0pt, minimum size=1mm},
	var/.style={circle,fill=black!10,draw=black,inner sep=0pt, minimum size=
		2mm},
	bvar/.style={circle,fill=black!15,draw=white,inner sep=0pt, minimum size=
		8mm},
	dotred/.style={circle,fill=black!50,inner sep=0pt, minimum size=2mm},
	generic/.style={semithick,shorten >=1pt,shorten <=1pt},
	dist/.style={ultra thick,draw=testcolor,shorten >=1pt,shorten <=1pt},
	testfcn/.style={ultra thick,testcolor,shorten >=1pt,shorten <=1pt,<-},
	testfcnx/.style={ultra thick,testcolor,shorten >=1pt,shorten <=1pt,<-,
		postaction={decorate,decoration={markings,mark=at position 0.6 with {\drawx}}}},
	kepsilon/.style={semithick,shorten >=1pt,shorten <=1pt,densely dashed,->},
	kprimex/.style={semithick,shorten >=1pt,shorten <=1pt,densely dashed,->,
		postaction={decorate,decoration={markings,mark=at position 0.4 with {\drawx}}}},
	kernel/.style={semithick,shorten >=1pt,shorten <=1pt,->},
	multx/.style={shorten >=1pt,shorten <=1pt,
		postaction={decorate,decoration={markings,mark=at position 0.5 with {\drawx}}}},
	kernelx/.style={semithick,shorten >=1pt,shorten <=1pt,->,
		postaction={decorate,decoration={markings,mark=at position 0.4 with {\drawx}}}},
	kernel1/.style={->,semithick,shorten >=1pt,shorten <=1pt,postaction={decorate,decoration={markings,mark=at position 0.45 with {\draw[-] (0,-0.1) -- (0,0.1);}}}},
	kernel2/.style={->,semithick,shorten >=1pt,shorten <=1pt,postaction={decorate,decoration={markings,mark=at position 0.45 with {\draw[-] (0.05,-0.1) -- (0.05,0.1);\draw[-] (-0.05,-0.1) -- (-0.05,0.1);}}}},
	kernelBig/.style={semithick,shorten >=1pt,shorten <=1pt,decorate, decoration={zigzag,amplitude=1.5pt,segment length = 3pt,pre length=2pt,post length=2pt}},
	gepsilon/.style={dotted,semithick,shorten >=1pt,shorten <=1pt},
	renorm/.style={shape=circle,fill=white,inner sep=1pt},
	labl/.style={shape=rectangle,fill=white,inner sep=1pt},
	xi/.style={circle,fill=symbols!10,draw=symbols,inner sep=0pt,minimum size=1.2mm},
	xix/.style={crosscircle,fill=symbols!10,draw=symbols,inner sep=0pt,minimum size=1.2mm},
	xib/.style={circle,fill=symbols!10,draw=symbols,inner sep=0pt,minimum size=1.6mm},
	xibx/.style={crosscircle,fill=symbols!10,draw=symbols,inner sep=0pt,minimum size=1.6mm},
	not/.style={circle,fill=symbols,draw=symbols,inner sep=0pt,minimum size=0.5mm},
	>=stealth,
}
\def\DeclareSymbol#1#2#3{\expandafter\gdef\csname MH@symb@#1\endcsname{\tikz[baseline=#2,scale=0.15,draw=symbols]{#3}}\expandafter\gdef\csname MH@symb@#1s\endcsname{\scalebox{0.7}{\tikz[baseline=#2,scale=0.15,draw=symbols]{#3}}}}
\def\<#1>{\csname MH@symb@#1\endcsname}
\begin{document}

\title{Signature inversion for monotone paths}
\author{Jiawei Chang\footnote{University of Oxford, UK. Email: jiawei.chang@maths.ox.ac.uk}, Nick Duffield\footnote{Texas A\&M University, USA. Email: duffieldng@tamu.edu}, Hao Ni\footnote{University College London, UK. Email: h.ni@ucl.ac.uk}, Weijun Xu\footnote{University of Warwick, UK. Email: weijun.xu@warwick.ac.uk}}

\maketitle

\begin{abstract}
	The aim of this article is to provide a simple sampling procedure to reconstruct any monotone path from its signature. For every $N$, we sample a lattice path of $N$ steps with weights given by the coefficient of the corresponding word in the signature. We show that these weights on lattice paths satisfy the large deviations principle. In particular, this implies that the probability of picking up a ``wrong" path is exponentially small in $N$. The argument relies on a probabilistic interpretation of the signature for monotone paths. 
\end{abstract}

\section{Introduction}

\subsection{The signature of a path}

A path $\gamma$ is a continuous map from a fixed interval $J$ into a metric space $(V, \|\cdot\|_{V})$. The length of $\gamma$ is defined by
\begin{align*}
\|\gamma\| := \sum_{\dD(J)} \| \gamma(u_{j+1}) - \gamma(u_j) \|_{V}, 
\end{align*}
where the sum is taken over all dissections $\dD(J) = \{u_{j}\}$ of the interval $J$. $\gamma$ is said have bounded variations if $\|\gamma\| < +\infty$. In what follows, we consider $V = \R^{d}$. Note that although the choice of the norm on $\R^{d}$ affects the actual length of $\gamma$, it does not affect whether $\gamma$ has bounded variations or not. 

Let $\{e_1, \dots, e_d\}$ denote the standard basis of $\R^{d}$. For every integer $n \geq 0$, a word of length $n$ is an ordered sequence of $n$ letters from the set $\{e_1, \dots, e_d\}$ (with repetition allowed). We use $|w|$ to denote the length of the word $w$, that is, the number of letters consisting of the word. For two words $w_1 = e_{i_1} \dots e_{i_n}$ and $w_2 = e_{j_1} \dots e_{j_m}$, their concatenation $w_1 * w_2$ is a new word of length $n+m$ defined by
\begin{align*}
w_{1} * w_{2} = e_{i_1} \dots e_{i_n} e_{j_1} \dots e_{j_m}. 
\end{align*}
We use $\emptyset$ to denote the empty word, which is the unique word of length $0$. The signature of a bounded variations path is defined as follows. 

\begin{defn} \label{de:signature}
	Let $\gamma: [0,1] \rightarrow \R^{d}$ be a continuous path of bounded variations. For every integer $n \geq 0$ and every word $w = e_{i_1} \dots e_{i_n}$, let
	\begin{align*}
	C_{\gamma}(w) = \int_{0 < u_{1} < \cdots < u_{n} < 1} \dot{\gamma}^{i_1}(u_1) \cdots \dot{\gamma}^{i_n}(u_n) d u_1 \cdots d u_n, 
	\end{align*}
	where $\gamma^{i}$ is the $i$-th component of $\gamma$. The signature of $\gamma$ is the formal power series
	\begin{align*}
	X(\gamma) = \sum_{n=0}^{+\infty} \sum_{|w|=n} C_{\gamma}(w) \cdot w, 
	\end{align*}
	where the second sum is taken over all words of length $n$, and we have set $C_{\gamma}(\emptyset) = 1$ by convention. 
\end{defn}

We call the collection $\{C_{\gamma}(w): |w|=n\}$ the $n$-th level coefficients in the signature. The signature is a definite integral over the fixed interval where $\gamma$ is defined. Changing the parametrisation or the size of the interval does not change the signature of $\gamma$. The signature contains important information about the path. For example, the collection $\{C_{\gamma}(w): |w|=1\}$ reproduces the increment of the path, and the second level coefficients $\{C_{\gamma}(w): |w|=2\}$ represents the (signed) areas enclosed by the projection of the path on $e_i - e_j$ planes. 

It was proved by Hambly and Lyons (\cite{sig_uniqueness}) that bounded variation paths are uniquely determined by their signatures up to tree-like pieces.\protect\footnote{Roughly speaking, two paths $\alpha$ and $\beta$ are tree-equivalent if the path $\alpha * \beta^{-1}$, obtained by running $\alpha$ first and then $\beta$ backwards, is a ``null-path" in the sense that all the trajectories cancel out themselves. Please refer to \cite{sig_tree} for a precise definition.} In \cite{symmetrisation}, Lyons and one of the authors developed a procedure based on the use of symmetrisation that enables one to reconstruct every $\cC^{1}$ path (when at natural parametrisation) from its signature. The purpose of this article is to give a significant simplification of the reconstruction procedure in the case when $\gamma$ is monotone.

\subsection{Monotone paths and the main result}

From now on, we fix our path $\gamma: [0,1] \rightarrow \R^{d}$ that is monotone. We can assume without loss of generality that $\gamma$ is monotonically increasing such that
\begin{align*}
\dot{\gamma}^{i}(t) \geq 0, \qquad \forall t \in [0,1], \forall i = 1, \dots, d. 
\end{align*}
We also equip $\R^{d}$ with the $\ell^{1}$ norm, so the length of a monotone path is then simply the sum of all its first level coefficients in the signature. Thus, we can assume without loss of generality that $L=1$; otherwise one can just simply recover $L$ first and rescale the path by $L^{-1}$ so that the new path has length $1$. Finally, since the signature is invariant under re-parametrisation, we also assume $\gamma$ is at natural parametrisation so that
\begin{equation} \label{eq:unit_speed}
\sum_{i=1}^{d} \dot{\gamma}^{i}(u) = 1, \qquad \forall u \in [0,1]. 
\end{equation}
Since $\gamma$ is monotonically increasing, we have $C_{\gamma}(w) \geq 0$ for every word $w$, and for every integer $N$, we have
\begin{align*}
\sum_{|w|=N} C_{\gamma}(w) = \frac{L^{N}}{N!} = \frac{1}{N!}
\end{align*}
since we have assumed $L=1$. This suggests that for every $N$, the quantities $\{ N! C_{\gamma}(w): |w|=N \}$ constitute a probability measure on the words of length $N$, giving each word $w$ with $|w|=N$ the ``probability" $N! C(w)$. Now, for every word with length $N$, we associate to it a lattice path $X_{N}$ with step size $\frac{1}{N}$ such that $X_{N}$ is a monotone lattice path parametrised at unit speed, and moves in exactly the same direction as the word $w$. More precisely, if
\begin{align*}
w = e_{i_1} \cdots e_{i_N}, 
\end{align*}
then
\begin{align*}
X_{N} = \frac{1}{N} \big( e_{i_1} * \cdots * e_{i_N} \big)
\end{align*}
at natural parametrisation, where ``*" denotes the concatenation of two paths, and we have had an abuse use of the notation $e_{i_k}$ also to denote the one-step lattice path moving in the $e_{i_k}$ direction. Now, for every $N \geq 0$, we assign the $N$-step paths $\{X_{N}: |w|=N\}$ ``probabilities" $N! C(w)$. This gives us a sequence of laws on the space of lattice paths. The main result of our article is the following. 

\begin{thm} \label{th:main_loose}
	The laws on $\{X_{N}\}$ above satisfies a large deviations principle on the space of continuous function from $[0,1]$ to $\R^{d}$. 
\end{thm}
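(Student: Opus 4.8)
The plan is to use the probabilistic interpretation of the signature to exhibit the law of each $X_N$ as a (nearly) continuous image of the empirical measure of $N$ i.i.d.\ samples, and then to combine Sanov's theorem with the contraction principle.

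\emph{Step 1: a probabilistic representation of $X_N$.} Since $\sum_i\dot\gamma^i(u)=1$ by \eqref{eq:unit_speed} and $\dot\gamma^i\ge 0$, the vector $\dot\gamma(u)=(\dot\gamma^1(u),\dots,\dot\gamma^d(u))$ is a probability vector for a.e.\ $u$. Hence the probability measure $w\mapsto N!\,C_\gamma(w)$ on words of length $N$ is exactly the law of the following experiment: draw $V_1,\dots,V_N$ i.i.d.\ uniform on $[0,1]$; given them, draw colours $J_1,\dots,J_N\in\{1,\dots,d\}$ independently with $\bP(J_k=i\mid V_k)=\dot\gamma^i(V_k)$; and output the word $e_{J_{\sigma(1)}}\cdots e_{J_{\sigma(N)}}$, where $\sigma$ sorts $V_1,\dots,V_N$ in increasing order. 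Indeed, conditioning on the order statistics (whose density on the simplex is $N!$), the probability of a fixed word $e_{i_1}\cdots e_{i_N}$ equals $N!\int_{0<u_1<\cdots<u_N<1}\prod_k\dot\gamma^{i_k}(u_k)\,du=N!\,C_\gamma(e_{i_1}\cdots e_{i_N})$. Thus $X_N$ is, in law, the monotone unit-speed path that concatenates the colours of the $N$ i.i.d.\ coloured points $(V_k,J_k)$ in the order of their first coordinates.

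\emph{Step 2: Sanov and the rearrangement map.} Let $\mu$ be the law of $(V_1,J_1)$ on the compact metric space $E:=[0,1]\times\{1,\dots,d\}$, so $\mu$ has ``density'' $\dot\gamma^i(v)$ with respect to Lebesgue measure times counting measure, and let $\mu_N:=\frac1N\sum_{k=1}^N\delta_{(V_k,J_k)}$. By Sanov's theorem, $\{\mu_N\}$ obeys an LDP on the space $\mathcal{M}_1(E)$ of probability measures with the weak topology, with good rate function $\nu\mapsto H(\nu\,\|\,\mu)$ (relative entropy); crucially this requires no regularity of $\dot\gamma$, only that $\gamma$ has bounded variation, so that $\dot\gamma\in L^\infty([0,1],\R^d)$. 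For $\nu\in\mathcal{M}_1(E)$ with atomless first marginal $\rho_\nu$, write $G_\nu$ for its distribution function and $G_\nu^{-1}$ for the quantile function, and define the \emph{monotone rearrangement}
\[
\Phi(\nu)(s):=\bigl(\nu\bigl([0,G_\nu^{-1}(s)]\times\{i\}\bigr)\bigr)_{i=1}^{d}\in\R^d,\qquad s\in[0,1],
\]
a continuous monotone path; $\Phi$ is continuous for the weak topology on the set of such $\nu$. A short computation shows that if $\nu\ll\mu$ then $\nu(dv,i)=\dot\phi^i(G_\nu(v))\,G_\nu'(v)\,dv$ for $\phi:=\Phi(\nu)$, whence, by the substitution $s=G_\nu(v)$,
\[
H(\nu\,\|\,\mu)=\int_0^1 H\bigl(\dot\phi(s)\,\big\|\,\dot\gamma(\tau(s))\bigr)\,ds-\int_0^1\log\tau'(s)\,ds,\qquad \tau:=G_\nu^{-1}.
\]
If $\mu_N$ had atomless first marginal one would have $X_N=\Phi(\mu_N)$ up to a $1/N$ error coming from the interpolation, and the contraction principle would immediately give the LDP for $X_N$ on $\cC([0,1],\R^d)$ with good rate function $I(\phi)=\inf_\tau\bigl(\int_0^1 H(\dot\phi(s)\,\|\,\dot\gamma(\tau(s)))\,ds-\int_0^1\log\tau'(s)\,ds\bigr)$, the infimum over increasing absolutely continuous bijections $\tau$ of $[0,1]$ with absolutely continuous inverse (and $I=+\infty$ otherwise); this should be the rate function in the sharp form of the statement.

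\emph{Step 3: removing the atoms, and conclusion.} The obstacle — which I expect to be the main technical point — is that the empirical measure $\mu_N$ has \emph{purely atomic} first marginal, so $\Phi(\mu_N)$ is not even continuous in $s$ and $\Phi$ is discontinuous there. I would handle this by mollification. For $\eta>0$ let $\Phi_\eta(\nu):=\Phi(\nu\ast(k_\eta\otimes\delta))$, where $k_\eta$ is a smoothing kernel of width $\eta$ (reflected at the endpoints so the marginal stays on $[0,1]$ with a continuous positive density); then $\Phi_\eta$ is defined and \emph{continuous} on all of $\mathcal{M}_1(E)$, with values in $K:=\{\phi\in\cC([0,1],\R^d):\phi(0)=0,\ \textstyle\sum_i\dot\phi^i\equiv1,\ \dot\phi^i\ge0\}$, which is compact (its members are $1$-Lipschitz in $\ell^1$ and vanish at $0$). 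Hence $\{\Phi_\eta(\mu_N)\}_N$ satisfies an LDP by the contraction principle. Moreover $\Phi_\eta(\mu_N)$ differs from $X_N$ only through the reordering of coloured points whose first coordinates lie within $O(\eta)$ of one another, so a Chernoff estimate on the number of such points at each rank level gives $\|\Phi_\eta(\mu_N)-X_N\|_\infty\le C\eta$ off an event of probability at most $e^{-Nc(\eta)}$ with $c(\eta)\to\infty$ as $\eta\downarrow0$; thus $\{\Phi_\eta(\mu_N)\}$ are exponentially good approximations of $\{X_N\}$. The standard approximation theorem for large deviations now produces the LDP for $\{X_N\}$ on $\cC([0,1],\R^d)$; exponential tightness is automatic since every $X_N$ lies in the compact set $K$. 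Finally $I$ vanishes only at $\phi=\gamma$: Jensen's inequality gives $-\int_0^1\log\tau'\ge0$ with equality iff $\tau=\mathrm{id}$, and then $\int_0^1 H(\dot\phi\,\|\,\dot\gamma)\,ds=0$ forces $\dot\phi=\dot\gamma$ a.e.\ — which is precisely the assertion that a path other than $\gamma$ is sampled with probability exponentially small in $N$.
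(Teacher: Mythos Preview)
Your probabilistic representation in Step~1 is exactly the paper's: conditioning a Poisson process with intensity $\dot\gamma$ on $N$ arrivals is the same as drawing $N$ i.i.d.\ uniform points coloured independently by $\dot\gamma$. From there on, however, the two arguments diverge.

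The paper first proves an LDP for the \emph{time-parametrised} path $W_N(t)=\frac1N W(t)$: it computes the log-moment generating function of the finite-dimensional increments (which are multinomial), applies G\"artner--Ellis, and then lifts to the process level via a Mogulski\u{\i}-type theorem. It then observes that $X_N=W_N\circ T_N$ with $T_N=Q_N^{-1}$ and $Q_N=\sum_i W_N^i$, so the LDP for $X_N$ follows from that for $(W_N,Q_N)$ by the contraction principle under inversion and composition. Your route bypasses the process-level Cram\'er/G\"artner--Ellis machinery entirely: you put all the randomness into the empirical measure $\mu_N$ on $[0,1]\times\{1,\dots,d\}$, invoke Sanov once, and then contract through a single rearrangement map. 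This is conceptually cleaner and immediately produces the variational formula $I(\phi)=\inf_\tau\bigl(\int H(\dot\phi\,\|\,\dot\gamma\circ\tau)-\int\log\tau'\bigr)$, which is precisely the paper's rate for $(X_N,T_N)$ contracted over the time variable; the paper instead leaves $\iI_X$ implicit and only gives a closed form after adjoining $T_N$.

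The one place to be careful is Step~3. Your literal claim---that $\|\Phi_\eta(\mu_N)-X_N\|_\infty\le C\eta$ off a set of probability $e^{-Nc(\eta)}$ with $c(\eta)\to\infty$---is stated the wrong way round: if the threshold scales like $\eta$, the large-deviation rate for $\{\text{max occupancy of an }\eta\text{-interval}>C\eta N\}$ does \emph{not} blow up as $\eta\downarrow0$. What you actually need (and what does hold) is the standard exponentially-good-approximation condition: for every fixed $\delta>0$,
\[
\limsup_{N\to\infty}\frac1N\log\P\bigl(\|\Phi_\eta(\mu_N)-X_N\|_\infty>\delta\bigr)\;\xrightarrow[\eta\downarrow0]{}\;-\infty,
\]
which follows from the Chernoff bound on $\max_v\#\{k:|V_k-v|\le\eta\}>\delta N$ once $\eta<\delta/4$. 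You should also note, for the identification of the limiting rate function, that every $\nu$ with $H(\nu\|\mu)<\infty$ has atomless first marginal, so $\Phi$ is already continuous on the effective domain of the Sanov rate---this is what makes the $\eta\to0$ limit of the contracted rates $I_\eta$ converge to your $I$. With those two adjustments the argument goes through.
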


The above theorem implies that one can reconstruct any monotone path from its signature by sampling directly from the lattice paths with weights given by the corresponding terms in the signature. More precisely, for fixed large $N$, one ``sample" a lattice path $\sS_{N} w$ according to the ``probabilities" $\{C(w): |w|=N\}$. The large deviations principle for these laws in Theorem \ref{th:main_loose} then ensures that the chance of picking a wrong lattice path is exponentially small in $N$. 

The proof of this theorem relies on a probabilistic interpretation of
the signature of monotone paths. Once this observation is made, the
rest follows directly from standard large deviations techniques. Also,
unfortunately, the rate function for the LDP for $\{X_N\}$ does not
have a closed form. However, an observation by \cite{functional_LDP}
suggests that we can add another random process $T_{N}$ (to be defined
below) to $\{X_{N}\}$, so that the pair $(X_{N}, T_{N})$ satisfies LDP
with a rate function of closed form. These will be made more precise in Section \ref{sec:lattice} below.

\begin{flushleft}
	\textbf{Acknowledgements}
\end{flushleft}
WX thanks Terry Lyons for very helpful discussions, especially for telling him about the probabilistic interpretation of signatures for monotone paths. ND is an Associate Member of the Oxford-Man Institute of Quantitative Finance, which partially supported this work with a visitorship during July 2015. WX is supported by EPSRC through the research fellowship EP/N021568/1.

\section{Sampling path large deviations}

\subsection{Probabilistic interpretation}
We first give the probabilistic interpretation of the signature of a monotone path. Let $\gamma: [0,1] \rightarrow \R^{d}$ be a monotone path parametrised at unit speed in the sense of \eqref{eq:unit_speed}, and it has length $1$ under this assumption. We can associate it with a probability measure on random lattice paths in the following way. Consider $d$ independent Poisson processes run simultaneously on the time interval $[0,1]$, generating letters $e_{1}, \dots, e_{d}$, respectively. Let $W(t)$ be the word of ordered letters that arrive up to time $t$. For example, if at times $0 < u_{1} < \dots < u_{5} < t$, the letters $e_{3}, e_{2}, e_{2}, e_{1}, e_{3}$ arrives, then
\begin{align*}
W(t) = e_{3} e_{2} e_{2} e_{1} e_{3}, \qquad W(v) = e_{2} e_{2}, \quad v \in [u_{1}, u_{2}). 
\end{align*}
One can make $W(\cdot)$ into a lattice path in the following way. Suppose the arrival times are $\tau_{j}$ for $j=1, 2, \dots$ with the convention that $\tau_{0} = 0$, then $W$ can be defined as a lattice path by setting $W(\tau_{0}) = 0$, and
\begin{align*}
W(t) = W(\tau_{j}) + (t-\tau_{j}) e_{i_j}, 
\end{align*}
where $e_{i_j}$ is the arriving letter at time $\tau_{j}$. We have thus associated to $\gamma$ a random lattice path $W$. 

We will be interested in the laws of $W$ conditional on the total number of arrivals up to time $1$. Thus, we let $\nN(t)$ be the process counting the total number of arrivals up to time $t$. Since $\gamma$ is parametrised at unit speed, $\nN(t)$ is a homogeneous Poisson process on $[0,1]$ with intensity $1$. 

Now, we condition on the event $\nN(1) = N$, that is, there are totally $N$ arrivals up to time $1$ (when the path runs out). Let
\begin{align*}
\P^{N}(\cdot) = \P \big(\cdot | \nN(1) = N \big)
\end{align*}
denote the conditional probability. Thus, for every word $w$ with $|w|=N$, with the abuse use of notation that $W$ denoting the word generated by the processes, we have precisely the relation
\begin{equation} \label{eq:prob_interpretation}
C(w) = \frac{1}{N!} \P^{N} \big( W = w \big) = \frac{1}{N!} \P \big( W = w | \nN(1) = N \big). 
\end{equation}
This is the probabilistic interpretation of the signature for lattice paths.

\subsection{Lattice path sampling and large deviations}
\label{sec:lattice}

From now on, we always condition on $\nN(1) = N$, and we will prove large deviations for a sequence of conditional laws $\lL \big( \cdot | \nN(1) = N\big)$. Let $W(t)$ be the random lattice path generated by the conditional Poisson process, and $W_{N}(t) = \frac{1}{N} \wW(t)$. Thus, under $\P^{N}$, every realisation of $W_{N}$ is a lattice path with step size $\frac{1}{N}$ and total length $1$ ($N$ steps). 

Our aim is to show the large deviations principles for the processes $W_{N}$ and its random time change version. For this, we first introduce the proper function spaces that the processes live in. Let $\cC$ denote the set of continuous functions from $[0,1]$ to $\R$ with the uniform topology, and let $\cC^{d}$ be $d$ copies of $\cC$. Also, we let
\begin{align*}
\aA = \bigg\{ \psi: [0,1] \rightarrow \R \phantom{1} \text{s.t.} \phantom{1} \psi(0)=0,  \phantom{1} \psi \phantom{1} \text{continuous} \phantom{1} \text{and} \phantom{1} \text{non-decreasing} \bigg\}, 
\end{align*}
and
\begin{align*}
\aA^{d}_{a} = \bigg\{ \Bpsi = (\psi^{1}, \dots, \psi^{d}): \psi^{i} \in \aA, \sum_{i=1}^{d} \psi^{i}(1) = a \bigg\}. 
\end{align*}
Note that $\psi \in \aA$ implies $\psi$ is absolutely continuous with $\dot{\psi} \geq 0$. For the set $\aA^{d}_{a}$, we will mainly use it for $a=1$. Also, we let the function $I: \R^{+} \times \R^{+} \rightarrow \R$ be
\begin{equation} \label{eq:function}
I(x,y) = x \log(x/y), \qquad x,y > 0. 
\end{equation}
Our first aim is to show that the conditional laws
\begin{align*}
\mu_{N} = \lL \big( W(\cdot) | \nN(1) = N \big)
\end{align*}
obey a large deviations principle for processes. In order to derive the rate function, we need the following lemma for its finite dimensional approximations.

\begin{lem} \label{le:finite}
	Let $k \geq 1$. For every $\{0 = u_{0} < u_{1} < \dots < u_{k} = 1\}$, the conditional laws
	\begin{align*}
	\lL \big( W_{N}(u_1) - W_{N}(u_0), \dots, W_{N}(u_k) - W_{N}(u_{k-1}) | \nN(1) = N \big)
	\end{align*}
	satisfy the large deviations principle with scale $N$ and good rate function
	\begin{align*}
	\iI_{k}(\z) = \sum_{i=1}^{d} \sum_{j=1}^{k} (u_j - u_{j-1}) \cdot I \bigg( \frac{z_{j}^{i} - z_{j-1}^{i}}{u_{i} - u_{i-1}}, \frac{\gamma_{u_j}^{i} - \gamma_{u_{j-1}}^{i}}{u_{j} - u_{j-1}} \bigg)
	\end{align*}
	when $\z = (\z_{1}, \dots,\z_{k}) \in (\R^{d})^{k}$ satisfies $\z_{j} \geq 0$, and
	\begin{align*}
	\sum_{i=1}^{d} z_{k}^{i} = 1, \qquad z_{j}^{i} \leq z_{j+1}^{i}, \qquad \forall i = 1, \dots, d, \phantom{1} j = 1, \dots, k, 
	\end{align*}
    where we have used the convention that $\z_{0} = \0$. Otherwise $\iI_{k}(\z) = \infty$. 
\end{lem}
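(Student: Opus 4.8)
\emph{Proof proposal.} The plan is to show that, after conditioning on $\nN(1)=N$, the increments of $W_N$ along the partition are — up to a deterministic error of size $O(1/N)$ — the rescaled cell counts of a multinomial, and then to invoke Sanov's theorem; the only real content is this reduction.

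\textit{Step 1 (reduce to a conditioned multinomial).} Since the signature depends only on $\dot\gamma$ I would first normalise $\gamma(0)=\0$, so $\gamma^i_u=\int_0^u\dot\gamma^i$. Recall that $W$ is generated by $d$ independent Poisson processes emitting $e_1,\dots,e_d$ at rates $\dot\gamma^1,\dots,\dot\gamma^d$ (this is exactly what makes \eqref{eq:prob_interpretation} hold), whose superposition $\nN$ is rate-$1$ Poisson. Fix the partition $\{u_j\}$; for each ``cell'' $(i,j)$ with $1\le i\le d$, $1\le j\le k$ let $X_{ij}$ count the $e_i$-arrivals in $(u_{j-1},u_j]$, and set $m_{ij}:=\gamma^i_{u_j}-\gamma^i_{u_{j-1}}\ge 0$. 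Increments of a Poisson process over disjoint intervals are independent and the $d$ processes are independent, so \emph{unconditionally} $(X_{ij})_{i,j}$ is a family of independent $\mathrm{Poisson}(m_{ij})$ variables; moreover $\sum_{i,j}X_{ij}=\nN(1)$ and $\sum_{i,j}m_{ij}=\sum_i\gamma^i_1=1$ by the unit-speed normalisation \eqref{eq:unit_speed}. Conditioning independent Poisson variables on their sum yields a multinomial with parameters proportional to the rates, so
\begin{equation*}
(X_{ij})_{i,j}\ \big|\ \{\nN(1)=N\}\ \sim\ \mathrm{Multinomial}\big(N;(m_{ij})_{i,j}\big).
\end{equation*}
Since $W$ interpolates the lattice points $W(\tau_\ell)=\nN(\tau_\ell)$ (the partial letter-sums), writing $\nN(\cdot)=(\nN^1,\dots,\nN^d)$ for the per-letter counts we get $\|W(t)-\nN(t)\|_{\ell^1}\le 1$ for all $t$; hence the $k$-tuple of increments of $W_N$ agrees with $\tfrac1N(X_{ij})_{i,j}$ (read in $(\R^d)^k$ via the cell decomposition) up to a deterministic $\ell^1$-error $O(1/N)$, so the two satisfy the same LDP and I may work with $\tfrac1N(X_{ij})_{i,j}$.

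\textit{Step 2 (Sanov).} By Step 1, under $\P^N$ the vector $\tfrac1N(X_{ij})_{i,j}$ is precisely the empirical distribution of $N$ i.i.d.\ samples from the probability vector $(m_{ij})$ on the finite index set $\{(i,j)\}$. Sanov's theorem for a finite alphabet — equivalently Cramér's theorem for the i.i.d.\ unit vectors $\1_{(i,j)}$ drawn with probabilities $m_{ij}$ — then gives the LDP at speed $N$ with good rate function the relative entropy $H(q\|m)=\sum_{i,j}q_{ij}\log(q_{ij}/m_{ij})$, under the conventions $0\log 0=0$ and $x\log(x/0)=+\infty$ for $x>0$; $H(\,\cdot\,\|\,m)$ is finite precisely on the compact set of probability vectors supported where $m_{ij}>0$ and is lower semicontinuous there, so its sublevel sets are compact. (Alternatively I could use G\"artner--Ellis: the conditional log-moment generating function of $(X_{ij})$ equals $N\log\sum_{i,j}m_{ij}e^{\theta_{ij}}$, finite and smooth on $\R^{dk}$, with Legendre transform $H(\,\cdot\,\|\,m)$.)

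\textit{Step 3 (change of coordinates and identify the rate).} Finally I would pass to the variables of the statement via $z^i_j:=\sum_{\ell\le j}q_{i\ell}$, so $z^i_j-z^i_{j-1}=q_{ij}$ with $\z_0=\0$; this is a linear bijection from the simplex onto $\{\z:\z_j\ge 0,\ z^i_j\le z^i_{j+1},\ \sum_i z^i_k=1\}$, with continuous inverse and closed image, so the contraction principle transports the LDP of Step 2 to $(W_N(u_j)-W_N(u_{j-1}))_j$, with good rate function $\z\mapsto H(q(\z)\|m)$ on that set and $+\infty$ off it. Since $I(x,y)=x\log(x/y)$, for each cell
\begin{equation*}
(u_j-u_{j-1})\,I\!\left(\frac{z^i_j-z^i_{j-1}}{u_j-u_{j-1}},\ \frac{\gamma^i_{u_j}-\gamma^i_{u_{j-1}}}{u_j-u_{j-1}}\right)=(z^i_j-z^i_{j-1})\log\frac{z^i_j-z^i_{j-1}}{\gamma^i_{u_j}-\gamma^i_{u_{j-1}}},
\end{equation*}
and summing over $i,j$ identifies $H(q(\z)\|m)$ with $\iI_k(\z)$; cells with $m_{ij}=0$ are absorbed into the ``otherwise $\iI_k(\z)=\infty$'' clause by the same conventions. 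The main (indeed essentially the only) obstacle is the reduction in Step 1; the two minor points to handle carefully are the harmless $O(1/N)$ gap between the lattice path and the cumulative count process, and the degenerate cells on which $\gamma$ is constant in some coordinate over some sub-interval, both dealt with by the standard entropy conventions.
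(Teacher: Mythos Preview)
Your proposal is correct and follows essentially the same route as the paper: both identify the conditional distribution of the partition increments as multinomial with cell probabilities $m_{ij}=\gamma^i_{u_j}-\gamma^i_{u_{j-1}}$ and then read off the LDP --- the paper via G\"artner--Ellis (which you mention parenthetically as an alternative, with exactly the paper's computation of the limiting cumulant generating function), you via Sanov for a finite alphabet. Your additional care with the $O(1/N)$ interpolation discrepancy between $W_N(u_j)$ and the rescaled counts is a point the paper simply glosses over.
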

\begin{proof}
	Fix $k \geq 1$ and $0 = u_{0} < \cdots < u_{k} = 1$, and let $\Lambda_{N}$ be the log moment generating function of the multi-vector $\big( \wW_{N}(u_1) - \wW_{N}(u_0), \dots, \wW_{N}(u_k) - \wW_{N}(u_{k-1}) \big)$, conditioned on $\nN_1 = N$. Here, each component is a $d$-dimensional vector, and this should be understood as a random vector in $(\Z / N)^{dk}$. Then, the conditional distribution (on $\nN_1 = N$) of this random vector is precisely multinomial with $N$ trials and probabilities
	\begin{align*}
	p_{j}^{i} = \gamma_{u_j}^{i} - \gamma_{u_{j-1}}^{i}. 
	\end{align*}
	Here, $p_{i}^{i}$ denotes the probability of the outcome of the trial being ``$\wW_{N}^{i}(u_j) - \wW_{N}^{i}(u_{i-1})$". Also, the $p_j^i$'s are already normalised as a probability since we have assumed $L=1$. Hence, for $\Btheta = \{\theta_j^i\} \in \R^{dk}$, we have
	\begin{align*}
	\frac{1}{N} \Lambda_{N}(N \Btheta) &= \frac{1}{N} \log \E \exp \bigg( \sum_{i,j} \theta_{j}^{i} \big( W_{N}^{i}(u_j) - W_{N}^{i}(u_{j-1}) \big) \bigg)\\ 
	&= \log \bigg( \sum_{i,j} \big( \gamma_{u_j}^{i} - \gamma_{u_{j-1}}^{i} \big) e^{\theta_{j}^{i}} \bigg), 
	\end{align*}
	where the second equality follows from the moment generating function for multinomial distribution, and the sum is taken over the set
	\begin{align*}
	i \in \{1, \dots, d\}, \qquad j \in \{1, \dots, k\}. 
	\end{align*} 
	Hence, the sequence $\{\Lambda_n\}$ satisfies the assumption of G\"{a}rtner-Ellis theorem (\cite[Theorem 2.3.6]{DZ}). Let
	\begin{align*}
	\Lambda(\Btheta) := \log \bigg(\sum_{i,j} \big(\gamma_{u_j}^{i} - \gamma_{u_{j-1}}^{i} \big) e^{\theta_{j}^{i}} \bigg), 
	\end{align*}
	then the laws 
	\begin{align*}
	\lL \big( W_{N}(u_1) - W_{N}(u_0), \dots, W_{N}(u_k) - W_{N}(u_{k-1}) | \nN(1) = N \big)
	\end{align*}
	satisfy the large deviations principle with the rate function
	\begin{align*}
	\Lambda^{*}(\z) &:= \sup_{\Btheta \in \R^{dk}} \bigg(\sum_{i,j} \theta_{j}^{i} \big(z_{u_j}^{i} - z_{u_{j-1}}^{i} \big) - \Lambda(\Btheta) \bigg) \\
	&= \sum_{i,j} \big( z_{u_j}^{i} - z_{u_{j-1}}^{i} \big) \log \bigg( \frac{z_{u_j}^{i} - z_{u_{j-1}}^{i}}{\gamma_{u_j}^{i} - \gamma_{u_{j-1}}^{i}} \bigg), 
	\end{align*}
	which is precisely the stated form. 
\end{proof}

We are now ready to give the large deviations principle for the conditional laws on the rescaled paths $W_N$. 

\begin{thm} \label{th:ldp1}
	For every $N \geq 0$, let
	\begin{align*}
	\mu_{N} = \lL \big( W_{N} | \nN(1) = N \big)
	\end{align*}
	be the law on $\cC^{d}$. Then, $\{\mu_{N}\}$ satisfies a large deviations principle with scale $N$ and good rate function
	\begin{align*}
	\iI_{W}(\Bpsi) = \sum_{i=1}^{d} \int_{0}^{1} I \big( \dot{\psi}^{i}(t), \dot{\gamma}^{i}(t) \big) dt
	\end{align*}
	when $\Bpsi \in \aA^{d}_{1}$, and $\iI_{W}(\Bpsi) = \infty$ otherwise. 
\end{thm}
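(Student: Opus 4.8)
The plan is to upgrade the finite-dimensional large deviations principle of Lemma~\ref{le:finite} to a full process-level LDP via the Dawson--G\"artner projective limit theorem, followed by an exponential tightness (or exponential good approximation) argument to transfer from the projective-limit topology to the uniform topology on $\cC^d$. First I would set up the projective system: for each finite set of times $\tau = \{0 = u_0 < u_1 < \cdots < u_k = 1\}$, let $p_\tau : \cC^d \to (\R^d)^{k+1}$ be the evaluation map $\Bpsi \mapsto (\Bpsi(u_0), \dots, \Bpsi(u_k))$. Lemma~\ref{le:finite} (restated in terms of the values $W_N(u_j)$ rather than the increments, which is an immediate linear change of variables) gives that the pushforwards $p_\tau {}_* \mu_N$ satisfy an LDP with scale $N$ and good rate function $\tilde\iI_\tau$ obtained from $\iI_k$ by the same change of variables. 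By Dawson--G\"artner (\cite[Theorem 4.6.1]{DZ}), the laws $\mu_N$ then satisfy an LDP on $\cC^d$ equipped with the topology of pointwise convergence, with good rate function
\begin{align*}
\iI_W(\Bpsi) = \sup_{\tau} \tilde\iI_\tau\big(p_\tau(\Bpsi)\big),
\end{align*}
the supremum being over all finite dissections $\tau$ of $[0,1]$.

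The next step is to identify this supremum with the stated integral functional. For $\Bpsi \in \aA^d_1$ each $\psi^i$ is non-decreasing, absolutely continuous, with $\psi^i(0) = 0$ and $\sum_i \psi^i(1) = 1$, so along any dissection the constraints $z^i_j \le z^i_{j+1}$, $\z_j \ge 0$, $\sum_i z^i_k = 1$ in Lemma~\ref{le:finite} are automatically met and
\begin{align*}
\tilde\iI_\tau\big(p_\tau(\Bpsi)\big) = \sum_{i=1}^d \sum_{j=1}^k (u_j - u_{j-1})\, I\!\left( \frac{\psi^i(u_j) - \psi^i(u_{j-1})}{u_j - u_{j-1}}, \frac{\gamma^i(u_j) - \gamma^i(u_{j-1})}{u_j - u_{j-1}} \right).
\end{align*}
Since $I(x,y) = x\log(x/y)$ is jointly convex on $\R^+ \times \R^+$ (it is the relative-entropy integrand, convex by the log-sum inequality), Jensen's inequality applied to each time subinterval shows this Riemann-type sum is monotone under refinement of $\tau$ and bounded above by $\sum_i \int_0^1 I(\dot\psi^i, \dot\gamma^i)\,dt$; conversely, taking $\tau$ to be a uniform partition and using the Lebesgue differentiation theorem (so that the averaged difference quotients converge a.e.\ to the derivatives) together with Fatou's lemma gives the reverse inequality. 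Hence the supremum equals $\iI_W(\Bpsi)$ for $\Bpsi \in \aA^d_1$. For $\Bpsi \notin \aA^d_1$ one checks that some finite-dimensional projection already forces the value $+\infty$: if $\Bpsi$ is not monotone, or $\Bpsi(0) \ne 0$, or $\sum_i \psi^i(1) \ne 1$, a single well-chosen dissection detects a forbidden configuration; if $\Bpsi$ is monotone but not absolutely continuous, the refinement argument drives the sum to $+\infty$. Goodness of $\iI_W$ (compact sublevel sets in $\cC^d$) follows because the sublevel set $\{\iI_W \le \alpha\}$ consists of uniformly bounded, uniformly equi-absolutely-continuous (hence equicontinuous by de~la~Vall\'ee-Poussin applied to the superlinear $I$) monotone functions, so it is relatively compact in the uniform topology, and $\iI_W$ is lower semicontinuous there.

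The main obstacle is the topology: Dawson--G\"artner delivers the LDP only in the weak pointwise topology, whereas Theorem~\ref{th:ldp1} asserts it in the stronger uniform topology of $\cC^d$. To close this gap I would prove that $\{\mu_N\}$ is exponentially tight in $\cC^d$ at scale $N$; by \cite[Corollary 4.2.6]{DZ} an LDP with a good rate function in a weaker Hausdorff topology together with exponential tightness in the stronger one automatically upgrades to an LDP in the stronger topology (with the same rate function, which is then necessarily good). Exponential tightness is where the concrete structure of $W_N$ enters: under $\P^N$, $W_N^i$ is $1/N$ times a sum of $N$ exchangeable $\{0,1\}$-increments, each coordinate is monotone with total increment summing to $1$, so a modulus-of-continuity estimate reduces to controlling, uniformly over short intervals, the conditional probability that many arrivals of a given letter fall into a short time window. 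This can be bounded by a Poisson/binomial Chernoff estimate: $\P^N\big(W_N^i(t+h) - W_N^i(t) \ge \delta\big) \le \exp(-N\, c(\delta, h))$ with $c(\delta,h) \to \infty$ as $h \to 0$ for fixed $\delta$, uniformly in $t$ and in $N$, using that the underlying intensities $\dot\gamma^i$ are bounded by $1$ (and, if one does not want to assume continuity of $\dot\gamma$, using only $\gamma^i(t+h) - \gamma^i(t) \le h$). Summing over the finitely many coordinates $i$ and over a dyadic grid of scales, and invoking the Arzel\`a--Ascoli characterization of compact sets in $\cC^d$, yields compact sets $K_L$ with $\limsup_N \frac1N \log \mu_N(K_L^c) \le -L$, which is exactly exponential tightness. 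I would expect this Chernoff-bound modulus estimate, done uniformly in $N$ and over all starting times, to be the most technical part of the argument, though it is entirely standard; everything else is either a direct appeal to Lemma~\ref{le:finite} and Dawson--G\"artner or a convexity/Lebesgue-differentiation computation to recognize the rate function.
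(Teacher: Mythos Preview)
Your proposal is correct and follows the same overall strategy as the paper: lift the finite-dimensional LDP of Lemma~\ref{le:finite} to the process level. The paper's own proof is a one-line invocation of \cite[Theorem~1]{process_LDP}, which is precisely a Mogulskii-type result packaging the Dawson--G\"artner projective-limit step, the identification of the supremum over partitions with the integral rate function via convexity of $I$, and the upgrade to the uniform topology; you have essentially unpacked that theorem in the present setting. So there is no genuine methodological difference---the paper cites a black box, you open it---and your added detail (the monotonicity-under-refinement argument for the rate function, and the Chernoff modulus-of-continuity bound for exponential tightness using $\dot\gamma^i\le 1$) is exactly what that black box contains.
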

\begin{proof}
	By Lemma \ref{le:finite}, any finite dimensional distribution of difference of the processes $\wW_N$ satisfies the large deviations principle. Thus, by \cite[Theorem 1]{process_LDP}, the laws of the processes $\wW_N(\cdot)$'s also satisfy the large deviations principle with good rate function
	\begin{align*}
	\iI_{W}(\Bpsi) = \sum_{i=1}^{d} \int_{0}^{1} I \big(\dot{\psi}^{i}(u), \dot{\gamma}^{i}(u) \big) du, \quad \psi^{i} \in \aA_{0} \phantom{1} \text{s.t.} \phantom{1} \sum_{i=1}^{d} \psi^{i}(1) = 1, 
	\end{align*}
	and $\infty$ otherwise. 
\end{proof}

Note that the above large deviations principle are for the processes $\{W_{N}(\cdot)\}$, where the time parametrisation is random and cannot be observed from the signature. We thus need to parametrise the paths $\wW_N$'s at unit speed. For this reason, we introduce the random time change below. 

We still condition on $\nN(1) = N$. For $j = 1, \dots, N$, let $\tau_{j} \in [0,1]$ denote the arrival time of the $j$-th word in the process $W$, so we have
\begin{align*}
\nN(t) = j, \qquad t \in [\tau_{j}, \tau_{j+1}). 
\end{align*}
Let the random map $T_{N}: [0,1] \rightarrow [0,\tau_N]$ be such that
\begin{align*}
T_{N}(q) = \tau_{j} + (Nq-j)(\tau_{j+1}-\tau_{j}), \qquad q \in \bigg( \frac{j}{N}, \frac{j+1}{N} \bigg]. 
\end{align*}
This says $T_{N}(j/N)$ is the arrival time of the $j$-th word, and linearly interpolate in between. Thus, $T_{N}$ is almost surely a strictly increasing map with inverse $Q_{N}: [0, \tau_N] \rightarrow [0,1]$ defined by
\begin{align*}
Q_{N}(t) = \frac{j}{N} + \frac{t - \tau_{j}}{N(\tau_{j+1} - \tau_{j})}, \qquad t \in \big( \tau_{j}, \tau_{j+1} \big]. 
\end{align*}
Thus, for every realisation such that $\nN(1) = N$, the random path $W_N \circ T_N$ is parametrised at unit speed. The intuition is that the map 
\begin{align*}
W_N \circ T_N: [0,1] \rightarrow (\Z/N)^{d}, \qquad q \mapsto W_{N} (T_N(q))
\end{align*}
takes the $(Nq)$-th arrival of the $N$ letters to the position ``$q$" of the path $W_N$. We let
\begin{align*}
X_{N} = W_{N} \circ T_{N}, 
\end{align*}
which is nothing but the lattice path $W_{N}$ re-parametrised at unit speed. To investigate the LDP for $X_N$, note that $T_{N} = Q_{N}^{-1}$, and the operations
\begin{align*}
(W_{N}, Q_{N}) \mapsto (W_{N}, T_{N}) \mapsto W_{N} \circ T_{N}
\end{align*}
are both continuous. Thus, by contraction principle, it suffices to prove the LDP for $(W_{N}, Q_{N})$. We give it in the following lemma. 

\begin{lem} \label{le:ldp_time}
	The laws for $(W_N, Q_N) \in \cC^{d} \times \cC$ conditioned on $\nN(1) = N$ satisfy the large deviations principle with scale $N$ and good rate function
	\begin{align*}
	\iI_{(W,Q)}(\Bpsi, \phi) = \sum_{i=1}^{d} \int_{0}^{1} I \big( \dot{\psi}^{i}(t), \dot{\gamma}^{i}(t) \big) dt
	\end{align*}
	for $\Bpsi \in \aA^{d}_{1}$, $\phi \in \aA$ such that $\sum_{i} \psi^{i} = \phi$, and $\iI_{(W,Q)} = \infty$ otherwise. 
\end{lem}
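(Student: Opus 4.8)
The plan is to obtain the joint principle for $(W_N,Q_N)$ directly from the principle for $W_N$ established in Theorem \ref{th:ldp1}, by invoking the contraction principle once one observes that, conditionally on $\nN(1)=N$, the random time change $Q_N$ is a fixed continuous functional of the path $W_N$ itself. So there is essentially no new probabilistic content beyond Theorem \ref{th:ldp1}; the work is in recognising the right deterministic relation and reading off the rate function.

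The first step is to pin down that relation. Conditioned on $\nN(1)=N$, consider the two functions $t\mapsto Q_N(t)$ and $t\mapsto \sum_{i=1}^{d}W_N^i(t)$ on $[0,1]$: both are continuous, non-decreasing, vanish at $0$, and by the very definitions of $W_N$ and of the linear interpolation defining $Q_N$ they take the common value $j/N$ at every arrival time $\tau_j$ (indeed $\sum_i W_N^i(\tau_j)=\frac1N\nN(\tau_j)=j/N=Q_N(\tau_j)$, and both are constant equal to $1$ on $[\tau_N,1]$). Being piecewise linear with the same breakpoints $\tau_1<\dots<\tau_N$ and the same nodal values, they coincide. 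Hence $Q_N=\Phi(W_N)$, where $\Phi\colon\cC^d\to\cC$, $\Phi(w)(t)=\sum_{i=1}^{d}w^i(t)$, is a bounded linear, hence continuous, map. (If the interpolation convention adopted for $W_N$ in Theorem \ref{th:ldp1} were taken to be the staircase $\frac1N\nN$ rather than its piecewise-linear version, the two differ by at most $1/N$ in the uniform norm, so the families are exponentially equivalent and nothing changes.)

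The second step is the contraction. Let $\Theta\colon\cC^d\to\cC^d\times\cC$, $\Theta(w)=(w,\Phi(w))$, which is continuous; by the previous step $(W_N,Q_N)=\Theta(W_N)$ on $\{\nN(1)=N\}$. Applying the contraction principle to the large deviations principle of Theorem \ref{th:ldp1}, the laws of $(W_N,Q_N)$ satisfy a large deviations principle with scale $N$ and good rate function $\iI_{(W,Q)}(\Bpsi,\phi)=\inf\{\iI_W(\tilde\Bpsi):\Theta(\tilde\Bpsi)=(\Bpsi,\phi)\}$. Since $\Theta$ is injective, this infimum is over at most one element: it equals $\iI_W(\Bpsi)=\sum_{i=1}^{d}\int_{0}^{1} I(\dot\psi^i(t),\dot\gamma^i(t))\,dt$ exactly when $\Bpsi\in\aA^d_1$ and $\phi=\sum_i\psi^i$, and is $+\infty$ otherwise; since $\Bpsi\in\aA^d_1$ already forces $\sum_i\psi^i\in\aA$ with value $1$ at time $1$, this is precisely the stated rate function. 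Goodness is inherited for free, as each sublevel set of $\iI_{(W,Q)}$ is the image under the continuous map $\Theta$ of the corresponding (compact) sublevel set of the good rate function $\iI_W$.

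The only genuinely delicate point is the first step: making the identity $Q_N=\Phi(W_N)$ (or the uniform closeness of $Q_N$ and $\sum_i W_N^i$, together with its exponential negligibility at speed $N$) fully rigorous; everything afterwards is a routine application of the contraction principle. A more hands-on alternative that bypasses the interpolation bookkeeping altogether is to redo the finite-dimensional analysis of Lemma \ref{le:finite} for the enlarged vector $\big(W_N(u_j)-W_N(u_{j-1});\,Q_N(u_j)\big)_j$: at the deterministic times $u_j$ one still has $Q_N(u_j)=\frac1N\nN(u_j)=\sum_i W_N^i(u_j)$, so the joint conditional law is the same multinomial, G\"artner--Ellis applies verbatim, and the passage to processes is again supplied by \cite[Theorem 1]{process_LDP}, now yielding the constraint $\phi=\sum_i\psi^i$ on the limit.
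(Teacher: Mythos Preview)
Your approach is essentially the same as the paper's: the paper's proof consists of the single observation that $Q_N(t)=\sum_i W_N^i(t)$ for all $t$, from which the rate function for the pair is read off from $\iI_W$ with the extra constraint $\phi=\sum_i\psi^i$. You make exactly this observation and then spell out the contraction-principle step and the interpolation bookkeeping more carefully than the paper does, but there is no substantive difference in strategy.
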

\begin{proof}
	The definition of $Q_{N}$ ensures that
	\begin{align*} 
	Q_{N}(t) = \sum_{i} W_{N}^{i}(t)
	\end{align*}
	for every $t \in [0,1]$, so the rate function for the pair $(W,Q)$ is the same as $\iI_{W}$ except that one further requires $\sum_{i} \psi^{i} = \phi$. Note that together with $\Bpsi \in \aA^{d}_{1}$, this constraint forces $\phi(1) = 1$ in order for $\iI_{(W,Q)}$ to be finite. 
\end{proof}

\begin{cor}
	The laws $\lL (X_{N} | \nN(1) = N)$ on $\cC^{d}$ satisfies a large deviations principle. 
\end{cor}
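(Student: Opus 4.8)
The plan is to obtain the statement from Lemma~\ref{le:ldp_time} via the contraction principle. Recall that $X_N = W_N \circ T_N$ with $T_N = Q_N^{-1}$, so $X_N = \Phi(W_N, Q_N)$ where
\begin{align*}
\Phi \colon (w, q) \mapsto w \circ q^{-1}
\end{align*}
sends a pair (path, time change) to the path reparametrised so that its clock runs at unit speed. Lemma~\ref{le:ldp_time} gives the joint large deviations principle for $(W_N, Q_N) \in \cC^d \times \cC$, conditioned on $\nN(1) = N$, with scale $N$ and good rate function $\iI_{(W,Q)}$. Provided $\Phi$ is continuous on a closed set that carries the laws $\lL(W_N, Q_N \mid \nN(1) = N)$ and contains the effective domain of $\iI_{(W,Q)}$, the contraction principle (\cite[Theorem~4.2.1]{DZ}, applied to such a restriction of $\Phi$) will yield the large deviations principle for $\lL(X_N \mid \nN(1) = N) = \lL(\Phi(W_N, Q_N) \mid \nN(1) = N)$ with scale $N$ and good rate function
\begin{align*}
\iI_X(\eta) = \inf \big\{ \iI_{(W,Q)}(\Bpsi, \phi) : \Phi(\Bpsi, \phi) = \eta \big\}.
\end{align*}
Since the constraint $\sum_{i} \psi^{i} = \phi$ is built into the domain of $\iI_{(W,Q)}$, writing $\Bpsi = \eta \circ \phi$ one finds that $\eta$ ranges over the unit-speed elements of $\aA^d_1$, i.e.\ those with $\sum_{i} \eta^{i} = \mathrm{id}_{[0,1]}$, and then
\begin{align*}
\iI_X(\eta) = \inf_{\phi \in \aA,\ \phi(1) = 1} \sum_{i=1}^{d} \int_{0}^{1} I\big( \partial_t(\eta^{i} \circ \phi)(t), \dot{\gamma}^{i}(t) \big)\, dt .
\end{align*}
The presence of this infimum over reparametrisations $\phi$ is precisely why $\iI_X$ has no closed form, as anticipated in Section~\ref{sec:lattice}; note also that taking $\phi = \mathrm{id}$ and $\eta = \gamma$ gives $\iI_X(\gamma) = 0$, so the sampled paths concentrate near $\gamma$.

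The one step needing genuine care — and the expected main obstacle — is the continuity of $\Phi$. On all of $\cC^d \times \cC$ the inversion $q \mapsto q^{-1}$ is discontinuous, so one restricts to the set $\sU$ of pairs $(w, q)$ with $q$ a strictly increasing continuous surjection of $[0,1]$; under $\P^N$ the arrival times $\tau_1 < \cdots < \tau_N$ are almost surely distinct, so $(W_N, Q_N) \in \sU$ almost surely, and on $\sU$ one has the genuine inverse, with $\phi_n \to \phi$ uniformly and $\phi$ strictly increasing forcing $\phi_n^{-1} \to \phi^{-1}$ uniformly, after which $w \mapsto w \circ q^{-1}$ is continuous by uniform continuity of $w$. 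The delicate point is that the large deviations lower bound may charge limiting pairs $(\Bpsi, \phi)$ outside $\sU$, where $\phi$ has flat stretches. Here one uses that on the effective domain of $\iI_{(W,Q)}$ one has $0 \le \dot{\psi}^{i} \le \dot{\phi}$ for every $i$, so wherever $\phi$ is flat each $\psi^{i}$ is flat too; hence $\Phi$ extends to $\overline{\sU}$ through the generalised inverse $q^{-1}(s) = \inf\{ t : q(t) > s \}$, and $w \circ q^{-1}$ is insensitive to the choice of value of $q^{-1}$ on such plateaux precisely because $w$ is constant there. Showing that this extension is continuous on $\overline{\sU}$ — uniform approximation of a possibly degenerate time change by strictly increasing ones, combined with the matched flatness of the $\psi^{i}$ — is the technical heart of the proof; the rest is bookkeeping.

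Finally, the same reasoning yields the stronger statement mentioned in Section~\ref{sec:lattice}: the map $(w, q) \mapsto (w \circ q^{-1}, q^{-1})$ is, on $\overline{\sU}$, a homeomorphism onto its image with inverse $(\eta, \sigma) \mapsto (\eta \circ \sigma^{-1}, \sigma^{-1})$, so the contraction principle transports the closed-form rate function $\iI_{(W,Q)}$ to a closed-form rate function for the pair $(X_N, T_N) = (W_N \circ T_N, Q_N^{-1})$, and the present Corollary is recovered from it by the trivial further contraction $(\eta, \sigma) \mapsto \eta$.
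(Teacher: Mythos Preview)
Your approach is exactly the paper's: deduce the LDP for $X_N$ from Lemma~\ref{le:ldp_time} via the contraction principle applied to $(w,q)\mapsto w\circ q^{-1}$, as announced in the paragraph immediately preceding that lemma; the corollary is stated in the paper without a separate proof. You are in fact more careful than the paper, which simply asserts continuity of the inversion and composition --- your discussion of the matched-flatness extension to degenerate time changes addresses a genuine subtlety that the paper glosses over, though you leave its verification as an outline rather than a complete argument.
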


The rate function for $X$ can be expressed in terms of $\iI_{(W,Q)}$
using the Contraction Principle \cite{DZ} but it does not have a closed form. A nice observation from \cite{functional_LDP} suggests that we can add the component $T_{N}$ so that the pair $(X_{N}, T_{N})$ satisfies the large deviations principle with a closed form rate function. This is the content of the following theorem. 

\begin{thm} \label{th:ldp2}
	The conditional laws
	\begin{align*}
	\nu_{N} = \lL \big( (X_N, T_N) | \nN(1) = N \big)
	\end{align*}
	satisfy a large deviations principle with scale $N$ and good rate function
	\begin{equation} \label{eq:ldp2}
	\iI_{(X, T)} \big(\Bzeta, \xi \big) = \iI_{(W,Q)} \big( \Bzeta \circ \xi^{-1}, \xi^{-1} \big) = \sum_{i=1}^{d} \int_{0}^{1} I \big( \dot{\zeta}^{i}(q), (\gamma^{i} \circ \xi)'(q) \big) dq
	\end{equation}
	for $\zeta^{i}, \xi \in \aA$ such that $\sum_{i} \dot{\zeta}^{i} \equiv 1$, and $\iI_{X, T} = \infty$ otherwise. 
\end{thm}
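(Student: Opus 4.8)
The plan is to realise $(X_N,T_N)$ as the image of the pair $(W_N,Q_N)$ of Lemma~\ref{le:ldp_time} under a time-change map, transport the LDP along it, and then put the resulting rate function into closed form by a change of variables. Since $T_N=Q_N^{-1}$ and $X_N=W_N\circ T_N=W_N\circ Q_N^{-1}$ we have $(X_N,T_N)=G(W_N,Q_N)$ with
\[
G(\Bpsi,\phi):=\big(\Bpsi\circ\phi^{-1},\,\phi^{-1}\big),
\]
whose inverse $(\Bzeta,\xi)\mapsto(\Bzeta\circ\xi^{-1},\xi^{-1})$, applied to $(X_N,T_N)$, recovers $(W_N,Q_N)$. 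Both $G$ and $G^{-1}$ are continuous for the uniform topology when restricted, respectively, to $\{(\Bpsi,\phi):\phi\text{ strictly increasing}\}$ and $\{(\Bzeta,\xi):\xi\text{ strictly increasing}\}$, because composition and inversion of continuous strictly increasing surjections are jointly continuous on those sets. If this transfer is legitimate, one gets an LDP for $\nu_N$ whose rate function at any $(\Bzeta,\xi)$ with $\xi$ strictly increasing equals $\iI_{(W,Q)}(\Bzeta\circ\xi^{-1},\xi^{-1})$, which is the first equality in \eqref{eq:ldp2}.

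The step needing care — and where I would invoke the observation of \cite{functional_LDP} — is that $G$ is genuinely discontinuous where $\phi=\sum_i\psi^i$ fails to be strictly increasing, and the effective domain of $\iI_{(W,Q)}$ does contain such points: on an interval where $\dot\phi\equiv0$ all $\dot\psi^i$ vanish and $I(\dot\psi^i,\dot\gamma^i)=0$ (convention $0\log0=0$), so $\iI_{(W,Q)}$ may be finite there, and the plain contraction principle does not apply verbatim. The rescue is that the \emph{target} rate function cannot be finite off the good set: if the right-hand side of \eqref{eq:ldp2} is finite, then on any set where $\dot\xi=0$ one has $(\gamma^i\circ\xi)'=(\dot\gamma^i\circ\xi)\dot\xi=0$, while $\sum_i\dot\zeta^i\equiv1$ forces some $\dot\zeta^i>0$ there, giving $I(\dot\zeta^i,0)=+\infty$. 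Hence $\{\iI_{(X,T)}<\infty\}$ lies inside $\{\xi\text{ strictly increasing}\}$, on which $G^{-1}$ is continuous, and one proves the LDP for $\nu_N$ directly: for the upper bound on a closed set $F$, intersect with a neighbourhood of the good set, push Lemma~\ref{le:ldp_time}'s bound through the continuous restriction of $G$, and discard the remainder, which is negligible because $\iI_{(W,Q)}$ is good and $G$ of the bad set avoids $\{\iI_{(X,T)}<\infty\}$; for the lower bound on an open set $O$, each $(\Bzeta,\xi)\in O$ of finite rate lies in the good set, equals $G(\Bzeta\circ\xi^{-1},\xi^{-1})$, and the bound there follows from Lemma~\ref{le:ldp_time} and continuity of $G$ at that preimage; goodness of $\iI_{(X,T)}$ transfers the same way. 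I expect this transfer to be the only real obstacle; everything else is bookkeeping.

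Finally I would put $\iI_{(W,Q)}(\Bzeta\circ\xi^{-1},\xi^{-1})$ into the stated closed form. With $\Bpsi=\Bzeta\circ\xi^{-1}$ and $\phi=\xi^{-1}$ one has $\phi\in\aA$, and $\Bpsi\in\aA^d_1$ with $\sum_i\psi^i=\phi$ holds if and only if $\sum_i\zeta^i(q)\equiv q$, i.e.\ $\sum_i\dot\zeta^i\equiv1$ — exactly the admissibility in the statement, with both sides $+\infty$ otherwise. On the good set, substitute $t=\xi(q)$, $dt=\dot\xi(q)\,dq$ in $\iI_{(W,Q)}(\Bpsi,\phi)=\sum_i\int_0^1 I(\dot\psi^i(t),\dot\gamma^i(t))\,dt$; using $\dot\psi^i(\xi(q))=\dot\zeta^i(q)/\dot\xi(q)$ and $(\gamma^i\circ\xi)'(q)=\dot\gamma^i(\xi(q))\dot\xi(q)$ together with the homogeneity $I(ax,ay)=aI(x,y)$ gives $I(\dot\psi^i(\xi(q)),\dot\gamma^i(\xi(q)))\,\dot\xi(q)=I(\dot\zeta^i(q),(\gamma^i\circ\xi)'(q))$, hence $\iI_{(W,Q)}(\Bpsi,\phi)=\sum_i\int_0^1 I(\dot\zeta^i(q),(\gamma^i\circ\xi)'(q))\,dq$, which is precisely \eqref{eq:ldp2}.
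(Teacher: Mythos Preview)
Your proposal is correct and follows essentially the same route as the paper: transfer the LDP from $(W_N,Q_N)$ to $(X_N,T_N)$ via the inverse time-change map, then reduce the rate function to closed form by the substitution $t=\xi(q)$ and the homogeneity of $I$. The only difference is presentational: where the paper invokes \cite[Theorem~4]{functional_LDP} for ``large deviations for inverse processes'' to handle the transfer in one stroke, you unpack the same mechanism by hand, isolating the discontinuity of $G$ at non-strictly-increasing $\phi$ and arguing that the target rate function is infinite off the good set — this is exactly the content that the cited theorem packages.
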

\begin{proof} 
	Since $T_{N} = Q_{N}^{-1}$, it follows directly from the large deviations for inverse processes (see e.g. \cite[Theorem 4]{functional_LDP}) and Lemma \ref{le:ldp_time} that
	\begin{align*}
	\iI_{(X, T)} \big(\Bzeta, \xi \big) = \iI_{(W,Q)} \big( \Bzeta \circ \xi^{-1}, \xi^{-1} \big) = \sum_{i=1}^{d} \int_{0}^{1} I \bigg( \big( \zeta^{i} \circ \xi^{-1} \big)'(t), \dot{\gamma}^{i}(t) \bigg) dt. 
	\end{align*}
	To derive the specific form of the rate function (the second equality in \eqref{eq:ldp2}), we note that for each $i$, we have
	\begin{align*}
	\big( \zeta^{i} \circ \xi^{-1} \big)'(t) = \frac{\dot{\zeta}^{i}(\xi^{-1}(t))}{\dot{\xi}(\xi^{-1}(t))}, 
	\end{align*}
	so a change of variable $q = \xi^{-1}(t)$ gives
	\begin{align*}
	\int_{0}^{1} I \bigg( \big( \zeta^{i} \circ \xi^{-1} \big)'(t), \dot{\gamma}^{i}(t) \bigg) dt &= \int_{0}^{1} I \bigg( \frac{\dot{\zeta}^{i} \big( \xi^{-1}(t) \big)}{\dot{\xi}\big( \xi^{-1}(t) \big)}, \dot{\gamma}^{i}(t) \bigg) dt\\
	&= \int_{0}^{1} I \bigg( \frac{\dot{\zeta}^{i}(q)}{\dot{\xi}(q)}, \dot{\gamma}^{i}(q) \bigg) d \xi(q)\\
	&= \int_{0}^{1} \dot{\zeta}^{i}(q) \log \bigg( \frac{\dot{\zeta}^{i}(q)}{ \dot{\gamma}^{i}(\xi(q)) \dot{\xi}(q)} \bigg) dq. 
	\end{align*}
	The constraint that $\xi \in \aA_{0,1}$ is obvious. For the constraint of $\zeta$, we notice that by Lemma \ref{le:ldp_time}, we need
	\begin{align*}
	\sum_{i=1}^{d} \zeta^{i}(\xi^{-1}(q)) = \xi^{-1}(q), \qquad \forall q \in [0,1]. 
	\end{align*}
	This is equivalent as $\sum_{i} \psi^{i}(t) = t$ for all $t$, or $\sum_{i} \dot{\psi}^{i} \equiv 1$. 
\end{proof}

\subsection{Connections with the symmetrisation procedure}
\label{sec:symmetrisation}

In \cite{symmetrisation}, the authors used a procedure of symmetrisation to produce a deterministic sequence of piecewise linear approximations from the signature of a $\cC^{1}$ path to the true path. The construction of that sequence requires rather complicated operations beyond symmetrisation between terms in the signatures. The aim of this subsection is to show that, in the case of monotone paths, these piecewise linear paths can also be "sampled" in a rather straightforward way after symmetrisation. This is actually a simple consequence of the large deviations principle for lattice path sampling. 

We first briefly recall the symmetrisation procedure on signatures used in \cite{symmetrisation}, and will mainly follow the notations there. For every integer $N \geq 0$ and $k \geq 0$, let $\pP_{N,k}$ denote the set of $k$-partitions of $N$; that is, 
\begin{align*}
\pP_{N,k} = \bigg\{ \n = \big( n_{1}, \dots, n_{k} \big): n_{j} \geq 0, \phantom{1} \sum_{j=1}^{k} n_{j} = N \bigg\}. 
\end{align*}
For $\n \in \pP_{N,k}$, let
\begin{align*}
\lL^{\n}_{k} = \bigg\{ \Bell = \big( \Bell_{1}, \dots, \Bell_{k} \big): \Bell_{j} = \big( \ell_{j}^{1}, \dots, \ell_{j}^{d} \big), \phantom{1} \sum_{i=1}^{d} \ell_{j}^{i} = n_{j}, \phantom{1} \forall j = 1, \dots, k \bigg\}. 
\end{align*}
Now, for $\n \in \pP_{N,k}$ and $\Bell \in \lL^{\n}_{k}$, define the set of words $\wW^{\n}_{k}(\Bell)$ by
\begin{align*}
\wW^{\n}_{k}(\Bell) = \bigg\{ w = w_{1} * \cdots * w_{k}: |w_{j}|_{e_i} = \ell_{j}^{i}, \phantom{1} \forall i=1, \dots, d, j = 1, \dots, k \bigg\}, 
\end{align*}
and define the symmetrised signatures by
\begin{align*}
\sS^{\n}_{k}(\Bell) := N! \sum_{w \in \wW^{\n}_{k}(\Bell)} C(w). 
\end{align*}
In other words, $\sS^{\n}_{k}(\Bell)$ is the sum of the coefficients of all words $w$ such that $w = w_{1} * \cdots * w_{k}$, and the number of letters $e_{i}$ in $w_{j}$ is $\ell_{j}^{i}$. 

Note that in \cite{symmetrisation}, the symmetrisation procedure is taken with $n_j \equiv n$ for all $j$, and $N = kn$, so the set-up above is a slight generalisation of that in \cite{symmetrisation}. Recall the random word $W$ generated by the Poisson process associated to the path $\gamma$; we have
\begin{align*}
\P^{N} \big( W \in \wW^{\n}_{k} \big) = \sS^{\n}_{k}(\Bell). 
\end{align*}
Thus, each $\wW^{\n}_{k}$ corresponds to a random piecewise linear path, which we call $Y^{\n}_{k}$. We have the following theorem. 

\begin{thm} \label{th:ldp_symmetrisation}
	For every $N \geq 0$, let $k = k(N)$ and $\n \in \pP_{N,k(N)}$ be such that
	\begin{align*}
	k(N) \rightarrow +\infty, \quad \text{and} \quad \sup_{1 \leq j \leq k(N)} \frac{n_{j}}{N} \rightarrow 0
	\end{align*}
	as $N \rightarrow +\infty$. Then, the sequence $(Y^{\n}_{k,N}, T_n)$ satisfies the large deviations principle with the same rate function as in Theorem \ref{th:ldp2}. 
\end{thm}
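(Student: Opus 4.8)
The plan is to derive Theorem~\ref{th:ldp_symmetrisation} from Theorem~\ref{th:ldp2} by an exponential-equivalence argument. Conditionally on $\nN(1)=N$, the pair $(X_N,T_N)$ already satisfies the large deviations principle on $\cC^d\times\cC$ with the good rate function $\iI_{(X,T)}$, so by the theory of exponentially equivalent families (\cite[Theorem~4.2.13]{DZ}) it is enough to show that $(Y^{\n}_{k(N),N},T_n)$ is exponentially equivalent to $(X_N,T_N)$. I would read $T_n$ as the same (fine) time change $T_N$ of Section~\ref{sec:lattice}: symmetrisation only reshuffles the letters within each block and does not touch their arrival times, so nothing changes in the time component. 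Under this reading the two time components coincide, and the task reduces to proving that for every $\delta>0$,
\begin{align*}
\limsup_{N\to\infty}\frac1N\log\P^N\Big(\big\|\,Y^{\n}_{k(N),N}-X_N\,\big\|_\infty>\delta\Big)=-\infty,
\end{align*}
with $\|\cdot\|_\infty$ the uniform norm on $\cC^d$.

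The point is that this discrepancy is controlled \emph{deterministically} on $\{\nN(1)=N\}$ by monotonicity. Write $m_j=n_1+\cdots+n_j$, so that block $j$ occupies the parameter interval $[m_{j-1}/N,m_j/N]$; note the hypothesis $\sup_j n_j/N\to 0$ already forces $k(N)\to\infty$. Both $X_N$ and $Y^{\n}_{k(N),N}$ are unit-speed, coordinate-wise non-decreasing lattice paths, and they agree at every block endpoint, because
\begin{align*}
X_N\Big(\tfrac{m_j}{N}\Big)=\frac1N\sum_{p=1}^{m_j}e_{i_p}=\frac1N\big(\Bell_1+\cdots+\Bell_j\big)=Y^{\n}_{k(N),N}\Big(\tfrac{m_j}{N}\Big),
\end{align*}
where $e_{i_1},\dots,e_{i_N}$ are the letters of $W$ and $\Bell_{j'}$ is identified with $\sum_i\ell_{j'}^i e_i$. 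On block $j$ the $i$-th coordinate of each path is non-decreasing with total increment exactly $\ell_j^i/N\le n_j/N$, hence both stay in an interval of length $\ell_j^i/N$, so that $|X_N^i(q)-(Y^{\n}_{k(N),N})^i(q)|\le n_j/N$ for every $q$ in that block. Taking suprema over $i$ and $j$,
\begin{align*}
\big\|\,Y^{\n}_{k(N),N}-X_N\,\big\|_\infty\le\sup_{1\le j\le k(N)}\frac{n_j}{N}\longrightarrow 0\qquad(N\to\infty).
\end{align*}
Thus for each $\delta>0$ there is $N_0$ with $\P^N(\|Y^{\n}_{k(N),N}-X_N\|_\infty>\delta)=0$ for all $N\ge N_0$, which yields the displayed $\limsup=-\infty$; together with the identical time components this is exactly exponential equivalence of $(Y^{\n}_{k(N),N},T_N)$ and $(X_N,T_N)$, and \cite[Theorem~4.2.13]{DZ} then gives the LDP for the former with the rate function of Theorem~\ref{th:ldp2}.

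There is no hard analytic obstacle under this reading: the only things to get right are the parametrisation of $Y^{\n}_{k(N),N}$ (each block carries parameter length $n_j/N$, so block endpoints sit at $m_j/N$) and the endpoint identity above, after which monotonicity forces the deterministic bound and $\sup_j n_j/N\to0$ makes it vanish. The one genuinely delicate point is the interpretation of $T_n$: if instead $T_n$ is the \emph{coarsening} of $T_N$ that interpolates the arrival times only at the block endpoints $\tau_{m_j}$, then one must additionally control $\sup_j(\tau_{m_j}-\tau_{m_{j-1}})$. Under $\P^N$ the arrival times are the order statistics of $N$ i.i.d.\ uniforms on $[0,1]$, so $\P^N(\tau_{m_j}-\tau_{m_{j-1}}>\delta)=\P(\mathrm{Bin}(N,\delta)<n_j)$, and whether a union bound over $j\le k(N)$ combined with a Chernoff estimate is strong enough for exponential equivalence (versus needing a direct LDP for the pair, in the spirit of the inverse-process argument of \cite{functional_LDP} used for Theorem~\ref{th:ldp2}) is exactly where the proof would require the most care. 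I expect this time-component bookkeeping, rather than the spatial estimate, to be the main issue.
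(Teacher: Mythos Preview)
Your proposal is correct and follows essentially the same route as the paper: both argue exponential equivalence with $(X_N,T_N)$ via the deterministic bound $\|Y^{\n}_{k(N),N}-X_N\|_\infty\le\sup_j n_j/N$, which vanishes by hypothesis and makes the relevant probability eventually zero. Your reading of $T_n$ as the fine time change $T_N$ is exactly what the paper intends (its proof concludes with ``exponential equivalence of $(Y^{\n}_{k},T_N)$ and $(X_N,T_N)$''), and your treatment of the endpoint matching and the monotonicity bound is in fact more carefully spelled out than the paper's own.
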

\begin{proof}
	It suffices to show that $Y^{\n}_{k,N}$ and $X_{N} \circ T_N$ are exponentially equivalent. In fact, for every realisation of the lattice path $X_N \circ T_N$, $Y^{\n}_{k}$ is its polygonal approximation such that its $j$-th piece connects the points $X_{N} \big( T_{N}(\bar{n}_{j-1} / N) \big)$ and $X_{N} \big( T_{N}(\bar{n}_{j}/N) \big)$, where
	\begin{align*}
	\bar{n}_{j} = \sum_{\ell=1}^{j} n_{\ell}. 
	\end{align*}
	Thus, the difference between the $j$-th piece of $Y^{\n}_{k}$ and the corresponding part in $X_N \circ T_N$ is at most $\frac{n_j}{N}$, and hence
	\begin{align*}
	\| Y^{\n}_{k}(q) - X_{N}(q) \big) \|_{\infty} \leq \sup_{j} \frac{n_{j}}{N}.  
	\end{align*}
	Thus, we have
	\begin{align*}
	\P^{N} \big( \|Y^{\n}_{k} - X_{N} \|_{\infty} \geq \delta \big) \leq \P^{N} \bigg( \sup_{j} \frac{n_j}{N} \geq \delta \bigg) = 0
	\end{align*}
	for all sufficiently large $N$. This proves the exponential equivalence of $(Y^{\n}_{k}, T_N)$ and $(X_{N}, T_{N})$, and hence the LDP follows. 
\end{proof}

\section{A numerical example}

In this section, we will provide a numerical example for the sampling scheme introduced above. For the convenience of simulation and display, we use piecewise linear approximations as in Section \ref{sec:symmetrisation}, and we use $n_j \equiv n$ for all $j$, so the total number of arrivals is $N = kn$. We study the following example. 

\begin{eg} \label{eg:example}
	Let $\gamma: [0, 1] \rightarrow \R^{2}$ be the path
	\begin{align*}
		\gamma(t) = t e_{1} + t^{2} e_{2} = (t, t^{2}), \qquad \forall t \in [0, 1].
	\end{align*}
	Thus, $\gamma$ is a two-dimensional monotone path with length $1$. 
\end{eg}

We compute the truncated signature of the piecewise linear approximation of $X$ with the time mesh $0.01$ as the approximate value of that of $X$. Then we apply the inversion algorithm outlined in the above section, and we obtain the following results.

\subsection{$k=2$, \quad $3 \leq n \leq 8$}

We now test the two-piece approximation to $\gamma$ in Example \ref{eg:example}. We fix $k=2$, and vary $n$ from $3$ to $8$ to look at their accuracies. We show the probability matrices $\mM_{k,n}$ for the two-piece approximations for different $n$ in the tables below. 

The entries in the $j$-th row ($j=1,2$) represents the ``probabilities" that are assigned to various directions of the $j$-th linear piece. More precisely, the $(j,m)$ entry is the ``probability" that the direction of the $j$-th piece has the $e_1:e_2$ ratio $m: (n-m)$. The red colour indicates which of the direction has been assigned the biggest weight in sampling. The weight of the two-piece linear path is simply the product of the weights of two pieces, and these two pieces have the same $\ell^{1}$ length. 

For example, in Table \ref{table:2_4}, the ``probability" that the direction of the first piece has $e_{1}: e_{2}$ ratio $m: (4-m)$ for $m = 0, 1, 2, 3, 4$ are
\begin{align*}
2.64 \times 10^{-2}, \quad 1.51 \times 10^{-1}, \quad 3.35 \times 10^{-1}, \quad 3.46 \times 10^{-1}, \quad 1.42 \times 10^{-1}, 
\end{align*}
respectively, and the direction with the ratio $3:1$ has the biggest weight. For $n=4$, the maximum weight of the two-piece linear piece is the one whose $e_{1}: e_{2}$ ratios of the two pieces are $3:1$ and $1:3$, respectively. 

The two-piece linear paths with the biggest weight for $n = 3, \dots, 8$ are plotted in Figure \ref{figure:k2}. One can see clearly that the MLS estimator of the path is closer to the true path $\gamma(t) = (t, t^{2})$ as $t$ increases. 

\begin{table}[ht]
	\centering
	\begin{tabular}{c | c cc c}
		\hline
		$j \backslash m$ &0 & 1&2&3\\
		\hline
		1&6.63E-02&2.83E-01&	\color{red}{4.24E-01}	&2.26E-01\\
		2&2.21E-01	&\color{red}{4.31E-01}	&2.84E-01	&6.32E-02\\
		\hline
	\end{tabular}
	\caption{$k=2, n=3$}
	\label{table:2_3}
\end{table}

\begin{table}[ht]
	\centering
	\begin{tabular}{c | c cc c c}
		\hline
		$j \backslash m$ &0 & 1&2&3&4\\
		\hline
		1&2.64E-02 & 1.51E-01 & 3.35E-01 & \color{red}{3.46E-01} & 1.42E-01  \\
		2&1.37E-01 & \color{red}{3.51E-01} & 3.40E-01 & 1.48E-01 & 2.44E-02  \\
		\hline
	\end{tabular}
	\caption{$k=2, n=4$}
	\label{table:2_4}
\end{table}

\begin{table}[ht]
	\centering
	\begin{tabular}{c | c cc cc c}
		\hline
		$j \backslash m$ &0 & 1&2&3&4&5\\
		\hline
		1&1.05E-02&	7.51E-02&	2.21E-01&	\color{red}{3.37E-01}&	2.67E-01&	8.94E-02\\
		2& 8.45E-02	&2.69E-01	&\color{red}{3.44E-01}&	2.22E-01	&7.20E-02	&9.44E-03\\
		\hline
	\end{tabular}
	\caption{$k=2, n=5$}
	\label{table:2_5}
\end{table}

\begin{table}[ht]
	\centering
	\begin{tabular}{c | c c c c }
		\hline
		$j \backslash m$  &0 &1&2&3\\
		\hline
		1&4.17E-03&	3.59E-02&	1.31E-01&	\color{red}{2.64E-01}\\
		2&5.23E-02&	1.98E-01&	\color{red}{3.15E-01}&	2.68E-01		\\
		\hline
		\hline
		$j \backslash m$& 4 &5&6\\
		\hline
		1   	&4.17E-03&	3.59E-02	&1.31E-01	&2.64E-01\\
		2& 5.23E-02&	1.98E-01&	3.15E-01	&2.68E-01\\
		\hline
	\end{tabular}
	\caption{$k=2, n=6$}
	\label{table:2_6}
\end{table}

\begin{table}[ht]
	\centering
	\begin{tabular}{c | c c c c  }
		\hline
		$j \backslash m$  &0 &1&2&3\\
		\hline
		1&1.66E-03	&1.67E-02	&7.31E-02	&1.82E-01\\
		2&3.24E-02	&1.42E-01	&2.70E-01	&\color{red}{2.86E-01}\\
		\hline
		\hline
		$j \backslash m$&4 &5&6&7\\
		\hline
		1   	&\color{red}{2.80E-01}&	2.66E-01&	1.45E-01&	3.54E-02\\
		2&1.83E-01&	7.04E-02&	1.52E-02&	1.41E-03\\
		\hline
	\end{tabular}
	\caption{$k=2, n=7$}
	\label{table:2_7}
\end{table}

\begin{table}[ht]
	\centering
	\begin{tabular}{c | c c c c c c c }
		\hline
		$j \backslash m$  &0 &1&2&3&4\\
		\hline
		1&8.18E-04	&8.58E-03	&4.11E-02	&1.18E-01&2.19E-01	\\
		2&1.99E-02	&9.94E-02	&2.19E-01	&\color{red}{2.77E-01}&2.22E-01		\\
		\hline
		\hline
		$j \backslash m$  &5&6&7&8\\
		\hline
		1   	&\color{red}{2.71E-01}	&2.17E-01	&1.03E-01	&2.22E-02\\
		2    &1.16E-01	&3.87E-02	&7.64E-03	&6.81E-04\\
		\hline
	\end{tabular}
	\caption{$k=2, n=8$}
	\label{table:2_8}
\end{table}

\clearpage

\begin{figure}[!ht]
	\centering
	\includegraphics[width=0.8\textwidth]{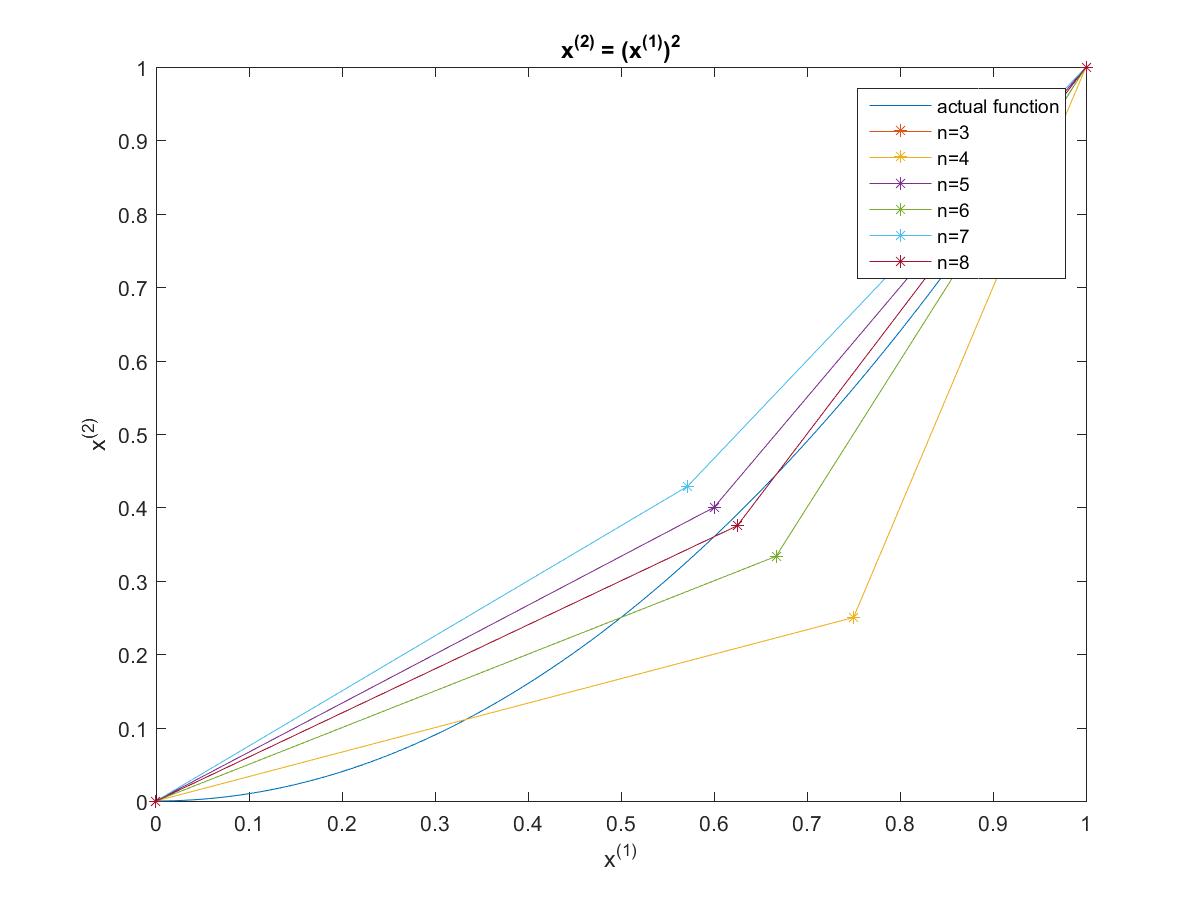}
	\caption{MLE estimators of $\gamma$ for $k=2$ and $3 \leq n \leq 8$}
	\label{figure:k2}
\end{figure}

\subsection{$n=4$, \quad $k=3,4$}

Here, we fix $n = 4$, and take $k=3$ or $4$. The weights for each approximation are listed in Tables \ref{table:3_4} and \ref{table:4_4} below, and the MLS estimator of the path are plotted in \ref{figure:n4}. Note that it also includes the case of $(k,n) = (2,4)$ from the previous subsection.

\begin{table}[!ht]
	\centering
	\begin{tabular}{c | c c c c c   }
		\hline
		$j \backslash m$  &0 &1&2&3&4\\
		\hline
		1&1.40E-02	&1.01E-01	&2.87E-01	&{\color{red}3.87E-01}	&2.11E-01\\
		2&8.84E-02	&2.89E-01	&{\color{red}3.65E-01}	&2.11E-01	&4.71E-02\\
		3&1.58E-01	&{\color{red}3.69E-01}	&3.26E-01	&1.28E-01	&1.90E-02\\
		\hline
	\end{tabular}
	\caption{$\mathcal{P}_{n, k}$ for $n = 3$, $k =4$}
	\label{table:3_4}
\end{table}

\begin{table}[ht]
	\centering
	\begin{tabular}{c | c c c c c   }
		\hline
		$j \backslash m$  &0 &1&2&3&4\\
		\hline
		1&8.78E-03 & 7.30E-02 & 2.47E-01 & {\color{red}4.02E-01} & 2.69E-01 \\
		2&6.35E-02 & 2.45E-01 & \color{red}{3.67E-01} & 2.55E-01 & 6.98E-02 \\
		3&1.16E-01 & 3.26E-01 &\color{red}{ 3.52E-01} & 1.74E-01 & 3.33E-02 \\
		4&1.67E-01 &\color{red}{3.76E-01} & 3.19E-01 & 1.20E-01 & 1.72E-02\\
		\hline
	\end{tabular}
	\caption{$\mathcal{P}_{n, k}$ for $n = 4$, $k =4$}
	\label{table:4_4}
\end{table}

\begin{figure}[!ht]
	\centering
	\includegraphics[width=0.8\textwidth]{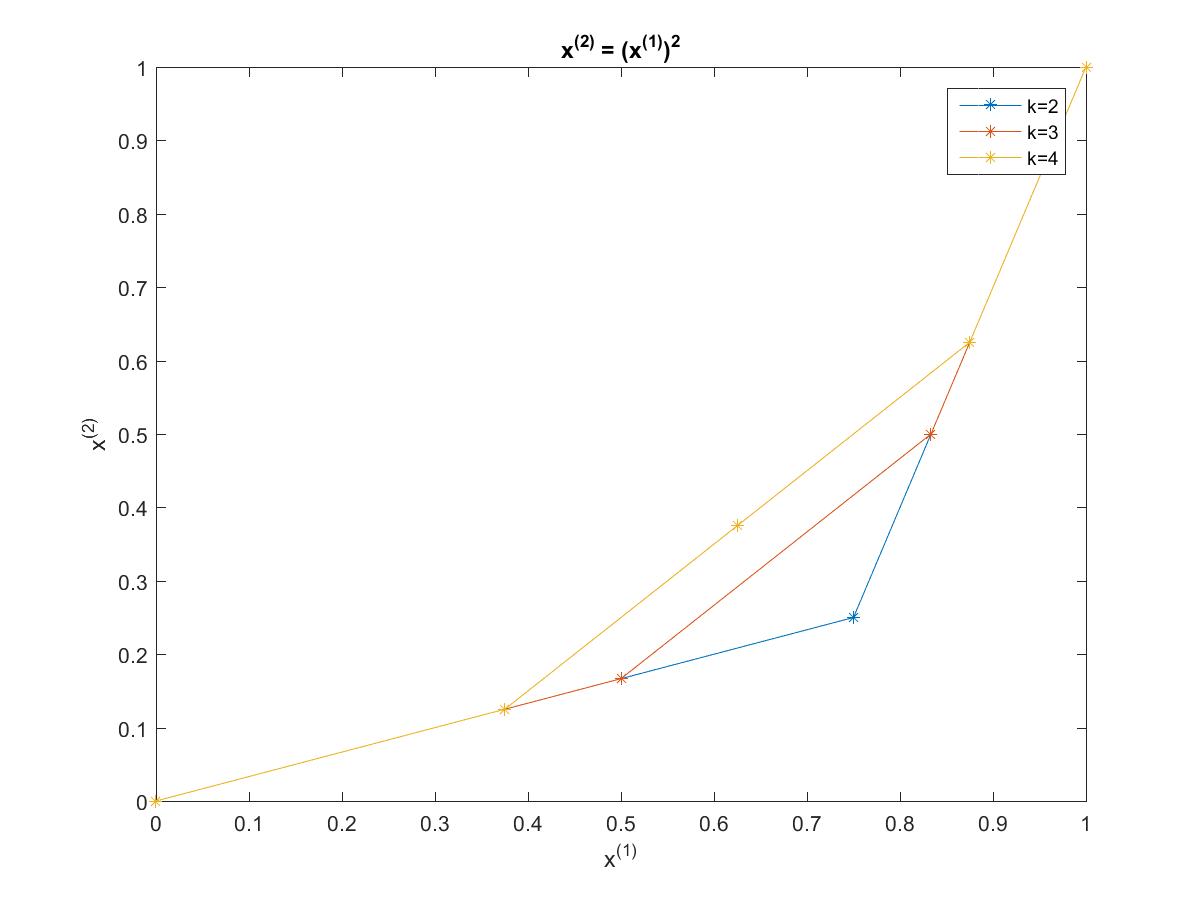}
	\caption{MLE estimators of $\gamma$ for $n=4$ and $k=2,3,4$}
	\label{figure:n4}
\end{figure}


\bibliographystyle{Martin}
\bibliography{Refs}

\end{document}